%% This is file `elsarticle-template-1-num.tex',
%%
%% Copyright 2009 Elsevier Ltd
%%
%% This file is part of the 'Elsarticle Bundle'.
%% ---------------------------------------------
%%
%% It may be distributed under the conditions of the LaTeX Project Public
%% License, either version 1.2 of this license or (at your option) any
%% later version.  The latest version of this license is in
%%    http://www.latex-project.org/lppl.txt
%% and version 1.2 or later is part of all distributions of LaTeX
%% version 1999/12/01 or later.
%%
%% The list of all files belonging to the 'Elsarticle Bundle' is
%% given in the file `manifest.txt'.
%%
%% Template article for Elsevier's document class `elsarticle'
%% with numbered style bibliographic references
%%
%% $Id: elsarticle-template-1-num.tex 149 2009-10-08 05:01:15Z rishi $
%% $URL: http://lenova.river-valley.com/svn/elsbst/trunk/elsarticle-template-1-num.tex $
%%
\documentclass[preprint,12pt]{article}

%% Use the option review to obtain double line spacing
%% \documentclass[preprint,review,12pt]{elsarticle}

%% Use the options 1p,twocolumn; 3p; 3p,twocolumn; 5p; or 5p,twocolumn
%% for a journal layout:
%% \documentclass[final,1p,times]{elsarticle}
%% \documentclass[final,1p,times,twocolumn]{elsarticle}
%% \documentclass[final,3p,times]{elsarticle}
%% \documentclass[final,3p,times,twocolumn]{elsarticle}
%% \documentclass[final,5p,times]{elsarticle}
%% \documentclass[final,5p,times,twocolumn]{elsarticle}

%% if you use PostScript figures in your article
%% use the graphics package for simple commands
%% \usepackage{graphics}
%% or use the graphicx package for more complicated commands
%% \usepackage{graphicx}
%% or use the epsfig package if you prefer to use the old commands
%% \usepackage{epsfig}

%% The amssymb package provides various useful mathematical symbols
\usepackage{amssymb}
%% The amsthm package provides extended theorem environments
\usepackage{amsthm}
\usepackage{amsfonts}
\usepackage{amsmath}

\usepackage{epsfig}
\usepackage{graphicx}
\usepackage{color}

%% The lineno packages adds line numbers. Start line numbering with
%% \begin{linenumbers}, end it with \end{linenumbers}. Or switch it on
%% for the whole article with \linenumbers after \end{frontmatter}.
%% \usepackage{lineno}

%% natbib.sty is loaded by default. However, natbib options can be
%% provided with \biboptions{...} command. Following options are
%% valid:

%%   round  -  round parentheses are used (default)
%%   square -  square brackets are used   [option]
%%   curly  -  curly braces are used      {option}
%%   angle  -  angle brackets are used    <option>
%%   semicolon  -  multiple citations separated by semi-colon
%%   colon  - same as semicolon, an earlier confusion
%%   comma  -  separated by comma
%%   numbers-  selects numerical citations
%%   super  -  numerical citations as superscripts
%%   sort   -  sorts multiple citations according to order in ref. list
%%   sort&compress   -  like sort, but also compresses numerical citations
%%   compress - compresses without sorting
%%
%% \biboptions{comma,round}

% \biboptions{}

% Your \newcommands below (if there are any):
%\newcommand{abs}[1]{{\left\lvert#1\right\rvert}}

\newcommand{\CV}{\mathcal{C}(V)}

\newcommand{\G}{\mathcal{G}}
\renewcommand{\L}{\mathcal{L}}

\def\RR{\mathbb{R}}
\def\L{\mathcal{L}}

\renewcommand{\AA}{{\sf A}}
\newcommand{\BB}{{\sf B}}
\newcommand{\CC}{{\sf C}}

\newcommand{\HH}{{\sf H}}
\newcommand{\II}{{\sf I}}
\newcommand{\MM}{{\sf M}}
\newcommand{\Ss}{{\sf S}}

\newtheorem{lemma}{Lemma}
\newtheorem{prop}{Proposition}
\newtheorem{theorem}{Theorem}
\newtheorem{corollary}{Corollary}

\def\v{{\sf v}}\def\u{{\sf u}}\def\w{{\sf w}}
\def\s{{\sf s}}
\def\0{{\sf 0}}
\def\LL{{\sf L}}
\newcommand{\XX}{{\sf X}}

\begin{document}

% \thispagestyle{empty}
%\hspace{-.5cm}
%%\includegraphics[scale=0.5]{UPCBN} \hspace{.5cm}

%%
%\vspace{-1.3cm}
%
%\hfill\begin{minipage}{.28\textwidth}
%\noindent {\small\bf GRUPO MAPTHE}
%
%%\noindent {\small\sc Dept. MAIII}
%
%\noindent {\scriptsize \bf Preprint P1-14-15.}
%
%\noindent {\scriptsize \bf January 2015.}
%\end{minipage}
% \setcounter{page}{0}
%
%5\begin{frontmatter}

\title{Group Inverse of the Laplacian of Connections of Networks}

%% use optional labels to link authors explicitly to addresses:
%% author[label1,label2]{<author name>}
%% address[label1]{<address>}
%% address[label2]{<address>}

\author{Silvia Gago}

\date{Departament de Matem\`atiques \\
Universitat Polit\`ecnica de Catalunya}

\maketitle

\begin{abstract}
%% Text of abstract
In previous works the group inverse of a network obtained by some perturbations, as the deletion of a vertex, the addition of a new vertex, contraction of an edge, etc. is obtained in terms of the group inverse of the original network. In this work, two given networks are connected with some new edges and the group inverse of the new network is related with the group inverses of the two original networks. In particular, the formula for the connection of two networks by just one edge is obtained, and besides the formula for the Kirchhoff index of this kind of network.
\end{abstract}

\textbf{Keywords:}
%% keywords here, in the form: keyword \sep keyword
Group inverse, Connections, Networks, Kirchhoff index
%% MSC codes here, in the form: \MSC code \sep code
%% or \MSC[2008] code \sep code (2000 is the default)

%%
%% Start line numbering here if you want
%%
% \linenumbers

%% main text
%%%%%%%%%%%%%%%%%%%%%%%%%%%%%%%%%
%%%%%%%%%%%%%%%%%%%%%%%%%%%%%%%%%
\section{Introduction}
\label{int}
In the last two decades a great amount of different parameters have been proposed to quantify structural properties of a network, such as their connectivity or their robustness, see \cite{ESMJK11}. Between them, maybe the most useful, and hence the most studied, is the {\it effective resistance} that has associated the {\it total resistance} as a global parameter. This concept appears when the network is considered as an electrical circuit where each edge is understood as a resistance whose inverse is called the conductance of the edge. The analogy between network and electrical circuits has been widely used in the Organic Chemistry framework, where the total resistance is known as the {\it Kirchhoff index}. In fact, it was in this area where the effective resistance emerged  as a better alternative to other parameters used for discriminating among different molecules with similar shapes and structures, see \cite{KR93}. 

However, the determination of the effective resistance is, in general, a veritable {\it tour de force} from the computational point of view. Firstly, it is highly sensitive from the small perturbations on the network, such as increasing or decreasing the conductance of the edges, adding or deleting some vertex, see for instance \cite{CEM14,C11,YK13}. On the other hand, as a function, the effective resistance is closely related to the Green function of the network, or from the matrix point of view, with the {\it group inverse} of the combinatorial Laplacian of the network. Therefore, the bigger is the network, the more difficult is to compute the group inverse of the combinatorial Laplacian. So, a common strategy is to consider large complex networks as {\it composite networks},  and to find   relations between the effective resistance and Kirchhoff indices of the original networks and those of their composite networks; see for instance \cite{BCE12,YK13,ZY09} and references therein. 
This strategy made necessary to extend the concept of effective resistance, defining the effective resistance between any pair of vertices with respect to a nonnegative value and a weight on the vertex set, see \cite{BCEG10,BCE12}. From the operator point of view, this means that the effective resistance can be also considered associated with positive semi--definite Schr\"odinger operators, instead of only for Laplacian operators. From the matrix point of view, it result that not only we can define the effective resistance for symmetric $M$--matrices, but else that even the classical effective resistance for the Laplacian matrix; that is, a singular and weakly diagonal dominant $M$--matrix, of a ``big'' network can be computed in terms of the effective resistance for non--singular symmetric $M$--matrices that are not necessarily d.d. ones. 

In this paper we consider composite networks built {\it connecting} two simpler networks. This kind of composition encompasses many composite operations on networks as join, corona, cluster or adding vertices. Since the key tool in the determination of the effective resistances in these composite networks is to compute their Green function, we first prove a general result about the computation of the group inverse of a singular symmetric matrix that has $0$ as simple eigenvalue and is partitioned into blocks, in terms of the inverse of the diagonal blocks. Of course, there exists many results in the literature computing generalized inverse of matrices partitioned into blocks, see for instance \cite{HM75}. However, since their aim is to provide a very general formulae, they are usually intricate and very difficult to manage them in specific cases. So, in specific frameworks it is more useful to provide, more or less directly, the formula required in that particular context. This is the case for the combinatorial Laplacians of a complex network obtained by connecting ``smaller" ones. Since this matrix can be seen as partitioned into blocks, where the non diagonal blocks represent the conductances of the new connecting edges, our first aim is to obtain a very general, but simple, result in relation with the computation of the the group inverse of a singular symmetric matrix for which $0$ is a simple eigenvalue. The second section of this paper is devoted to achieve this goal.

\section{The group inverse of a symmetric matrix partitioned into blocks}
\label{}

Consider the symmetric matrix $$\AA=\left(\begin{array}{cc}
\HH_1 & {\sf B}\\
{\sf B}^{\top} & \HH_2
\end{array}
\right),$$ where ${\sf B}\in \mathcal{M}_{n\times m}$, $\HH_1\in \mathcal{M}_{n\times n}$ and $\HH_2\in \mathcal{M}_{m\times m}$. Therefore,  $\HH_1$ and $\HH_2$ are  symmetric matrices and moreover we assume that $\HH_2$ is also invertible. From the identity 
$$ \left(\begin{array}{cc}
\HH_1 & {\sf B}\\
{\sf B}^{\top} &\HH_2
\end{array}
\right)\left(\begin{array}{c}
\v\\ \w\end{array}\right)=\left(\begin{array}{c}
{\sf 0}\\
{\sf 0}\end{array}\right),$$ 
we obtain that $\HH_1\v=-{\sf B}\w$, $\w=-\HH_2^{-1}{\sf B}^{\top}\v$ and hence,
$\big({\sf H}_1-{\sf B}\,\HH_2^{-1}\,{\sf B}^{\top}\big)\v=\0.$ 
 
Under the above  conditions the {\it Schur complement of $\HH_2$ in $\AA$}, usually denoted by $\AA/\HH_2$, is defined as the matrix   
\begin{equation}
\label{Schur:def}
{\sf S}={\sf H}_1-{\sf B}\,\HH_2^{-1}\,{\sf B}^{\top}.
\end{equation}
Clearly $\Ss$ is a symmetric matrix and the above identities show that $\AA$ is invertible iff ${\sf S}$ is invertible  and moreover $0$ is a simple eigenvalue of $\AA$ iff it is a simple eigenvalue of $\Ss$. \vspace{.25cm}

When $\AA$ is invertible, an expression for its inverse in terms of the inverses of $\HH_2$ and its Schur complement is known:
 \begin{equation}
\label{Schur:inv}
\AA^{-1}=\left(\begin{array}{cc}
{\sf S}^{-1} & -{\sf S}^{-1}{\sf B}\HH_2^{-1}\\[1ex]
-\HH_2^{-1}{\sf B}^\top{\sf S}^{-1}&
\HH_2^{-1}(\HH_2+{\sf B}^\top{\sf S}^{-1}{\sf B})\HH_2^{-1}
\end{array}
\right).
\end{equation}
 
Analogously, when $\HH_1$ is invertible, the   Schur complement of $\HH_1$ in $\AA$, usually denoted by $\AA/\HH_1$, is given by  $\widehat {\sf S}=\HH_2-{\sf B}^\top\,\HH_1^{-1}\,{\sf B}$ and hence $\AA$ is invertible iff $\widehat {\sf S}$ also is and then we have a formula for $\AA^{-1}$ similar to \eqref{Schur:inv}. Moreover, when both $\HH_1$ and $\HH_2$ are invertible, from the identities 

$$\begin{array}{rl}
\BB\HH_2^{-1}\widehat {\sf S}=&\hspace{-.3cm}\BB-\BB\HH_2^{-1}{\sf B}^\top\,\HH_1^{-1}\,{\sf B}={\sf S}\HH_1^{-1}\,{\sf B}\\[1ex]
\big(\HH_2^{-1}+\HH_2^{-1}{\sf B}^\top{\sf S}^{-1}{\sf B}\HH_2^{-1}\big)\widehat {\sf S}=&\hspace{-.3cm} {\sf I}-\HH_2^{-1}{\sf B}^\top\HH_1^{-1}\,{\sf B}+\HH_2^{-1}{\sf B}^\top{\sf S}^{-1}{\sf B}\\[1ex]
-&\hspace{-.3cm}\HH_2^{-1}{\sf B}^\top{\sf S}^{-1}\big({\sf B}\HH_2^{-1}{\sf B}^\top\big)\HH_1^{-1}\,{\sf B}\\[1ex]
=&\hspace{-.3cm} {\sf I}-\HH_2^{-1}{\sf B}^\top\Big[\HH_1^{-1}\,{\sf B}-{\sf S}^{-1}{\sf B}+{\sf S}^{-1}\big(\HH_1-{\sf S}\big)\HH_1^{-1}\,{\sf B}\big]\\[1ex]
=&\hspace{-.25cm} {\sf I}-\HH_2^{-1}{\sf B}^\top\Big[\HH_1^{-1}\,{\sf B}-{\sf S}^{-1}{\sf B}+{\sf S}^{-1}{\sf B}-\HH_1^{-1}\,{\sf B}\big]=\II
\end{array}$$
we get that ${\sf S}^{-1}\BB\HH_2^{-1}=\HH_1^{-1}\,{\sf B}\widehat {\sf S}^{-1}$ and hence, 
 \begin{equation}
\label{Schur:inv2}
\AA^{-1}=\left(\begin{array}{cc}
{\sf S}^{-1} & -\HH_1^{-1}\,{\sf B}\widehat {\sf S}^{-1}\\[1ex]
-\HH_2^{-1}{\sf B}^\top{\sf S}^{-1}&\widehat {\sf S}^{-1}
\end{array}
\right),
\end{equation}
see \cite[Identity (0.100)]{BG03}. 

Under similar hypothesis on the blocks of matrix $\AA$, the scenario dramatically changes when $\AA$ and hence $\Ss$ or $\widehat \Ss$ are singular. Then, we can not merely replace  $\Ss^{-1}$ by the group inverse  (that coincides with the  Moore-Penrose inverse) in the Identity \eqref{Schur:inv}, or in the Identity \eqref{Schur:inv2}, to obtain the expresion of the group inverse of $\AA$. 

The  expression for the Moore--Penrose inverse for general block matrices can be found in \cite{HM75}, but  is quite intricate and   difficult to manage, even under additional hypotheses on the blocks. Our goal is to obtain a friendly expression of the group inverse of $\AA$ under the additional hypothesis that $0$ is a simple eigenvalue. Observe  that the combinatorial Laplacian of a connected network always satisfies this property and moreover it is positive semidefinite.\ For this class of matrices, its group inverse can be characterized as follows.

\begin{lemma}
\label{GI:ch}
Assume that $\AA$ is singular, symmetric and   such that $0$ is a  simple eigenvalue. Then, if  $\u$ is an unitary $0$-eigenvector of $\AA$ the group inverse  $\AA^\#$ is characterized as the unique  singular matrix satisfying 
$$\AA\AA^\#=\AA^\#\AA=\II-\u\u^\top.$$
Moreover, $\AA^\#$ is symmetric, $\AA^\#\u=\0$ and $0$ is a simple eigenvalue of $\AA^\#$ .
\end{lemma}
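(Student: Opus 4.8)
The plan is to exploit the spectral decomposition of $\AA$, which is available because $\AA$ is real and symmetric, hence orthogonally diagonalizable and of index one (so that the group inverse certainly exists and coincides with the Moore--Penrose inverse). Write $\AA=\sum_{i\geq1}\lambda_i\,\v_i\v_i^\top$ with $\{\v_1,\v_2,\dots\}$ an orthonormal eigenbasis, where $\lambda_1=0$ with eigenvector $\v_1=\u$ and, because $0$ is a simple eigenvalue, $\lambda_i\neq0$ for all $i\geq2$. Since the group inverse is obtained by inverting the nonzero eigenvalues on the same eigenbasis, the candidate is $\AA^\#=\sum_{i\geq2}\lambda_i^{-1}\,\v_i\v_i^\top$, and I would verify each assertion directly from this expression.

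First I would record the completeness relation $\sum_{i\geq1}\v_i\v_i^\top=\II$, so that $\sum_{i\geq2}\v_i\v_i^\top=\II-\u\u^\top$. Multiplying the two spectral sums and using $\v_i^\top\v_j=\delta_{ij}$ gives $\AA\AA^\#=\AA^\#\AA=\sum_{i\geq2}\v_i\v_i^\top=\II-\u\u^\top$, which is the stated identity. Symmetry of $\AA^\#$ is immediate since each $\v_i\v_i^\top$ is symmetric; $\AA^\#\u=\0$ follows from $\u^\top\v_i=0$ for $i\geq2$; and the eigenvalues of $\AA^\#$ are $0$ (on $\u$) together with the nonzero numbers $\lambda_i^{-1}$, so $0$ remains a simple eigenvalue and in particular $\AA^\#$ is singular.

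The substantive part is the uniqueness claim, which I would settle by a difference argument. Suppose $X$ is any singular matrix with $\AA X=X\AA=\II-\u\u^\top$ and set $Y=X-\AA^\#$. Subtracting the identities already established for $\AA^\#$ gives $\AA Y=\0$ and $Y\AA=\0$. The first forces every column of $Y$ to lie in $\ker\AA=\langle\u\rangle$; the second forces every row of $Y$ to lie in the left kernel of $\AA$, which equals $\langle\u\rangle$ as well by symmetry. A matrix whose column space and row space are both contained in $\langle\u\rangle$ must be of the form $Y=\alpha\,\u\u^\top$ for a scalar $\alpha$, whence $X=\AA^\#+\alpha\,\u\u^\top$. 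Since $\u$ is a $0$-eigenvector of $\AA^\#$ and $\u^\top\u=1$, this $X$ has eigenvalue $\alpha$ on $\u$ and eigenvalues $\lambda_i^{-1}\neq0$ on the remaining $\v_i$; thus $X$ is singular exactly when $\alpha=0$, i.e.\ $X=\AA^\#$.

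The step I expect to demand the most care is pinning down the form $Y=\alpha\,\u\u^\top$: one must argue cleanly that the simultaneous constraints $\AA Y=Y\AA=\0$, combined with the simplicity of the eigenvalue $0$ and the symmetry of $\AA$, leave only the one-parameter family of multiples of $\u\u^\top$. Once this is in hand, the singularity hypothesis selects the single admissible value $\alpha=0$, and the characterization follows.
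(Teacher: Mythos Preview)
The paper states this lemma without proof, treating it as a known preliminary fact, so there is no argument in the paper to compare yours against. Your spectral-decomposition approach is correct and entirely standard: building $\AA^\#=\sum_{i\ge2}\lambda_i^{-1}\v_i\v_i^\top$ on an orthonormal eigenbasis immediately yields $\AA\AA^\#=\AA^\#\AA=\II-\u\u^\top$, symmetry, $\AA^\#\u=\0$, and the simplicity of $0$ for $\AA^\#$. Your uniqueness argument is also sound; the point you flag is routine once made explicit: from $\AA Y=\0$ one gets $Y=\u\,c^\top$ for some $c$, and from $Y\AA=\0$ one gets $Y=d\,\u^\top$ for some $d$, and equating these with $\u\neq\0$ forces $c$ (and $d$) to be scalar multiples of $\u$, hence $Y=\alpha\,\u\u^\top$. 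Then $X=\AA^\#+\alpha\,\u\u^\top$ is diagonal in the basis $\{\u,\v_2,\dots\}$ with eigenvalues $\alpha,\lambda_2^{-1},\dots$, so singularity forces $\alpha=0$.
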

%\proof We know that the group inverse for $\AA$ always exists, since $\AA$ is symmetric and moreover it is also symmetric, see \cite{BG03}.  
%
%Conversely, if $\AA^\#$ satisfies the claimed identity, then 
%$\AA(\AA^\#\w)=\AA^\#(\AA\w)=\AA^\#\0=\0$ and hence $\AA^\#\w=\alpha \w$. 
%\begin{lemma}[Woodbury Identity]
%Consider the matrices $\KK\in \mathcal{M}_{n\times n}$, $\UU\in \mathcal{M}_{n\times m}, \Rr \in \mathcal{M}_{m\times m}$ and $\VV\in \mathcal{M}_{m\times n}$, where $\KK$ and $\Rr$ are invertible. Then,
%%
%$$\big(\KK+\UU\Rr\VV\big)^{-1}=\KK^{-1}-\KK^{-1}\UU\big(\Rr^{-1}+\VV\KK^{-1}\UU\big)^{-1}\VV\KK^{-1},$$
%%
%provided that $\KK+\UU\Rr\VV$ and $\Rr^{-1}+\VV\KK^{-1}\UU$ are invertible.
%\end{lemma}
Now we are ready to prove the main result in this section. 

\begin{theorem} 
\label{lemmap}
Let $\HH_1\in \mathcal{M}_{n\times n}(\RR)$, $\HH_2\in \mathcal{M}_{m\times m}(\RR)$ be two    symmetric matrices, ${\sf B}\in \mathcal{M}_{n\times m}(\RR)$, and assume that the symmetric matrix  $\AA=\left(\begin{array}{cc}
\HH_1 & {\sf B}\\
{\sf B}^{\top} & \HH_2
\end{array}
\right)$ is singular and  has $0$ as a simple eigenvalue. 
If $\HH_2$ is non singular and  ${\sf S}={\sf H}_1-{\sf B}\,\HH_2^{-1}\,{\sf B}^{\top}$ is the Schur complement of $\HH_2$ on $\AA$, then  $\AA^\#=\left(\begin{array}{cc}
\XX_1 & {\sf C}\\
{\sf C}^{\top} & \XX_2
\end{array}
\right)$ where
$$\begin{array}{rl}
\XX_1=&\hspace{-.25cm}\big(\II_n+\v\,\w^{\top}\HH_2^{-1}\BB^{\top}\big)\Ss^{\#}\big(\II_n+\BB\HH_2^{-1}\w\,\v^{\top}\big)+\v\,\w^{\top}\HH_2^{-1}\w\,\v^{\top},\\[1.25ex]
\XX_2=&\hspace{-.25cm}(\II_m-\w\,\w^{\top})\Big(\HH_2^{-1}+\HH_2^{-1}\BB^\top\Ss^{\#}\BB\HH_2^{-1}\Big)\big(\II_m-\w\,\w^{\top}\big),\\[1.25ex]
\CC=&\hspace{-.25cm}-\Big(\v\,\w^{\top}+\big(\II_n+\v\,\w^{\top}\HH_2^{-1}\BB^\top\big)\Ss^{\#}\BB\Big)\HH_2^{-1} \big(\II_m-\w\,\w^{\top}\big).\\[1.25ex]
\end{array}$$
and  $\u^\top=(\v^\top,\w^\top)$ is an unitary $0$--eigenvector of $\AA$.
\end{theorem}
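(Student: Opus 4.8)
The plan is to invoke the characterization of the group inverse given in Lemma~\ref{GI:ch}. Writing $\MM$ for the block matrix displayed on the right-hand side of the statement, it suffices to verify three things: that $\MM$ is symmetric, that $\MM$ is singular, and that $\AA\MM=\II-\u\u^\top$. Symmetry is immediate, since $\XX_1$ and $\XX_2$ are visibly symmetric and the off-diagonal blocks are $\CC$ and $\CC^\top$; and because $\AA$ and $\MM$ are then both symmetric while $\II-\u\u^\top$ is symmetric, the companion identity $\MM\AA=\II-\u\u^\top$ follows for free by transposing $\AA\MM=\II-\u\u^\top$, so only one of the two products need be computed. The shape of the formula is explained by the factorization $\AA=\LL\,\mathrm{diag}(\Ss,\HH_2)\,\LL^\top$ with $\LL$ a block unit upper-triangular factor built from $\BB\HH_2^{-1}$: this makes the kernel structure transparent (recovering $\w=-\HH_2^{-1}\BB^\top\v$), but the naive transported matrix $\LL^{-\top}\mathrm{diag}(\Ss^\#,\HH_2^{-1})\LL^{-1}$ fails the commutation requirement $\AA\MM=\MM\AA$, and the rank-one terms in $\XX_1$, $\XX_2$, $\CC$ are precisely the corrections that repair it.

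The verification rests on a short list of identities, all consequences of the hypotheses and of Lemma~\ref{GI:ch} applied to $\Ss$ (whose $0$-eigenspace is spanned by $\v$, since $0$ is simple for $\AA$ iff it is simple for $\Ss$): namely $\HH_1=\Ss+\BB\HH_2^{-1}\BB^\top$; the eigenvector relations $\BB^\top\v=-\HH_2\w$ and $\HH_1\v=-\BB\w$; the annihilation relations $\Ss\v=\0$, $\Ss^\#\v=\0$ together with their transposes $\v^\top\Ss=\v^\top\Ss^\#=\0^\top$; and the projection identity $\Ss\Ss^\#=\Ss^\#\Ss=\II_n-\|\v\|^{-2}\,\v\v^\top$. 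I would then expand $\AA\MM$ into its four blocks and match them against the blocks $\II_n-\v\v^\top$, $-\v\w^\top$, $-\w\v^\top$, $\II_m-\w\w^\top$ of $\II-\u\u^\top$. Introducing the single auxiliary vector $\BB\HH_2^{-1}\w$ and the scalars $\w^\top\HH_2^{-1}\w$ and $\w^\top\HH_2^{-1}\BB^\top\Ss^\#\BB\HH_2^{-1}\w$ keeps the expressions compact, and most cross terms collapse because $\Ss^\#\v=\0$ and $\BB^\top\v=-\HH_2\w$ let the $\BB\HH_2^{-1}$ factors telescope against the projectors $\II_m-\w\w^\top$.

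The one genuinely delicate point — and the step I expect to be the main obstacle — is the bookkeeping of the normalization. The factor $\|\v\|^{-2}$ produced by $\Ss\Ss^\#$ appears nowhere in the stated blocks, so it must cancel. Tracking it through the $(1,1)$-block leads to the requirement $\v^\top\BB\HH_2^{-1}\w=\|\v\|^2-1$; and indeed, using $\BB^\top\v=-\HH_2\w$ one finds $\v^\top\BB\HH_2^{-1}\w=-\w^\top\HH_2\HH_2^{-1}\w=-\|\w\|^2$, whereupon this reduces to $\|\v\|^2+\|\w\|^2=1$, which is exactly the statement that $\u=(\v,\w)$ is a \emph{unit} $0$-eigenvector. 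Thus the unit-norm hypothesis is precisely what clears the denominators and makes the closed form correct, and the analogous cancellations occur in the remaining three blocks. Finally, to certify singularity I would check directly that $\MM\u=\0$ — a shorter computation of the same flavour, again collapsing via $\Ss^\#\v=\0$ and the norm identity — which exhibits $\u$ as a null vector of $\MM$. With $\MM$ symmetric, singular, and satisfying $\AA\MM=\MM\AA=\II-\u\u^\top$, the uniqueness clause of Lemma~\ref{GI:ch} identifies $\MM=\AA^\#$, completing the proof.
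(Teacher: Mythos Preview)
Your approach is correct but takes a different route from the paper's. You treat the displayed block formulas as given and \emph{verify} via Lemma~\ref{GI:ch} that they define the group inverse: symmetry of $\MM$ is read off directly, $\MM\AA=\II-\u\u^\top$ follows from $\AA\MM=\II-\u\u^\top$ by transposition, and $\MM\u=\0$ certifies singularity. The paper instead \emph{derives} the blocks: it posits an unknown $\left(\begin{smallmatrix}\XX_1&\CC\\ \CC^\top&\XX_2\end{smallmatrix}\right)$, writes out the block equations coming from $\AA\AA^\#=\II-\u\u^\top$ together with $\v^\top\XX_1=-\w^\top\CC^\top$, eliminates $\CC^\top$ through $\HH_2$, and then solves for $\XX_1$. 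The paper's key technical step is showing that $\II_n-\alpha\,\v\w^\top\HH_2^{-1}\BB^\top$ (with $\alpha=(\v^\top\v)^{-1}$) is invertible with inverse $\II_n+\v\w^\top\HH_2^{-1}\BB^\top$; this inversion is exactly what swallows the factor $\alpha$ produced by $\Ss\Ss^\#=\II_n-\alpha\,\v\v^\top$ and explains why no $(\v^\top\v)^{-1}$ survives in the final formulas --- the same cancellation you correctly flagged as the delicate point, reached from the other side via the unit-norm identity $\v^\top\v+\w^\top\w=1$. Your route is shorter once the formulas are already on the table, and your block factorization of $\AA$ supplies a conceptual explanation (with the rank-one terms as the repairs needed to restore commutation) that the paper does not make explicit; the paper's route is constructive and shows how the formulas arise.
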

\proof First notice that $\Ss\v=\0$ and moreover  $\w=-\HH_2^{-1}\BB^\top\v$, which implies that $\v\not=\0$. Therefore, if $\alpha=(1-\w^\top \w)^{-1}=(\v^\top\v)^{-1}$,  from Lemma \ref{GI:ch} we get that 
$$\Ss^\#\Ss =\Ss\Ss^\# =\II_{n}-\alpha\,\v\,\v^\top.$$
Applying newly Lemma \ref{GI:ch}, if the matrix  $\left(
\begin{array}{cc}
\XX_1 & \CC\\
\CC^{\top}&\XX_2
\end{array}
\right)$, is the group inverse of $\AA$, then  $\XX_1$ and $\XX_2$ are symmetric, 
$(\v^\top,\w^\top) \left(
\begin{array}{cc}
\XX_1 & \CC\\
\CC^{\top}&\XX_2
\end{array}
\right)  =  (\0^\top,\0^\top)$ and 
$$\left(\begin{array}{cc}
\HH_1 & \BB\\
\BB^{\top} &\HH_2
\end{array}
\right)
\left(
\begin{array}{cc}
\XX_1 & \CC\\
\CC^{\top}&\XX_2
\end{array}
\right)= \II-{\sf u}\,{\sf u}^\top=
\left(
\begin{array}{cc}
\II_n- \v\,\v^{\top}&-\v\,\w^\top \\
-\w\,\v^{\top}& \II_m-\w\,\w^{\top}
\end{array}
\right). 
$$
In particular, the following identities must be satisfied:
$$\begin{array}{lrl}
(a)& \HH_1 \XX_1+\BB\CC^{\top}& \hspace{-.25cm} =\II_n- \v\,\v^{\top}, \\[1ex]
(b)& \BB^{\top}\XX_1+ \HH_2\CC^{\top}& \hspace{-.25cm} =-\w\,\v^{\top}, \\[1ex]
(c)&  \v^\top\XX_1  & \hspace{-.25cm} =-\w^\top \CC^\top, \\[1ex]
(d) & \BB^{\top}\CC+ \HH_2\XX_2 & \hspace{-.25cm} =\II_m- \w\,\w^{\top}.
\end{array}$$
From $(b)$ we get $\CC^{\top}=-\HH_2^{-1}\big(\w\,\v^{\top}+\BB^{\top}\XX_1\big)$, and substituting in $(a)$
$$ \HH_1 \XX_1-\BB\HH_2^{-1}\big(\w\,\v^{\top}+\BB^{\top}\XX_1\big)=\II_n-\v\,\v^{\top},$$
which implies that 
$$\II_n-\v\,\v^{\top}+\BB\HH_2^{-1}\w\,\v^{\top}=\Ss\XX_1.$$
Now, multiplying by $\Ss^{\#}$ and taking into account that $\Ss^\#\v=\0$, 
$$ \Ss^{\#}\left(\II_n+\BB\HH_2^{-1}\w\,\v^{\top}\right)=\Ss^{\#}\Ss\XX_1=\left(\II_{n}-\alpha\,\v\,\v^\top \right)\XX_1,$$
%
%$$\XX-\dfrac{1}{(1-w'^2)}\v\,\v^\top\XX=\Ss^{\dag}\left(\II+\dfrac{\s}{\alpha}w'\v^{\top}\right),$$
and applying $(c)$, we get
$$\XX_1+\alpha\, \v\w^{\top}\CC^{\top}=\Ss^{\#}\left(\II_n+\BB\HH_2^{-1}\w\,\v^{\top}\right).$$
Bearing in mind the expression for $\CC^\top$ obtained above,
$$\XX_1-\alpha\, \v\w^{\top}\HH_2^{-1}\big(\w\,\v^{\top}+\BB^{\top}\XX_1\big)=\Ss^{\#}\left(\II_n+\BB\HH_2^{-1}\w\,\v^{\top}\right).$$

Moreover, defining as $\LL=\II_n-\alpha\,\v\w^{\top}\HH_2^{-1}\BB^{\top}$, we get that 
$$ 
\LL\XX_1= \Ss^{\#}\big(\II_n+
\BB\HH_2^{-1}\w\,\v^{\top}\big)+\alpha\, \v\w^{\top}\HH_2^{-1}\w\,\v^{\top}.$$
We next show that $\LL$ is invertible and moreover
$$\LL^{-1}=\II_n+\v\w^{\top}\HH_2^{-1}\BB^{\top}.$$
To prove that, notice that from the identity $\HH_2^{-1}\BB^\top \v=-\w$ we have 
$$\begin{array}{rl}
\LL\Big(\II_n+\v\w^{\top}\HH_2^{-1}\BB^{\top}\Big)=&\hspace{-.25cm}\Big(\II_n-\alpha\,\v\w^{\top}\HH_2^{-1}\BB^{\top}\Big)\Big(\II_n+\v\w^{\top}\HH_2^{-1}\BB^{\top}\Big)\\[1ex]
=&\hspace{-.25cm} \II_n-\alpha\,\v\w^{\top}\HH_2^{-1}\BB^{\top}+\v\w^{\top}\HH_2^{-1}\BB^{\top}-\alpha\,\v\w^{\top}\HH_2^{-1}\BB^{\top}\v\w^{\top}\HH_2^{-1}\BB^{\top}\\[1ex]
=&\hspace{-.25cm} \II_n-(\alpha-1-\alpha\,\w^{\top}\w)\v\w^{\top}\HH_2^{-1}\BB^{\top}=\II_n.\end{array}$$
As a consequence we obtain,
$$ 
\XX_1=\big(\II_n+\v\w^{\top}\HH_2^{-1}\BB^{\top}\big)\Ss^{\#}\big(\II_n+\BB\HH_2^{-1}\w\,\v^{\top}\big)+\v\w^{\top}\HH_2^{-1}\w\,\v^{\top},$$
since 
$$\big(\II_n+\v\w^{\top}\HH_2^{-1}\BB^{\top}\big)\v=\v-\v\,\w^{\top}\w=\alpha^{-1}\v,$$
and moreover that 
$$\begin{array}{rl}
\CC^{\top}=&\hspace{-.25cm}-\HH_2^{-1} \w\,\v^{\top}-\HH_2^{-1} \BB^{\top}\XX_1%=-\HH_2^{-1} \w\,\v^{\top}\\[1ex]
%-&\hspace{-.25cm}\HH_2^{-1} \BB^{\top}\big(\II_n+\v\w^{\top}\HH_2^{-1}\BB^{\top}\big)\Ss^{\#}\big(\II_n+\BB\HH_2^{-1}\w\,\v^{\top}\big) -\HH_2^{-1} \BB^{\top}\v\w^{\top}\HH_2^{-1}\w\,\v^{\top}
\\[1ex]
=&\hspace{-.25cm}-\HH_2^{-1} \w\,\v^{\top}-\big(\II_m-\w\w^{\top}\big)\HH_2^{-1}\BB^{\top}\Ss^{\#}\big(\II_n+\BB\HH_2^{-1}\w\,\v^{\top}\big)+\w\w^{\top}\HH_2^{-1}\w\,\v^{\top}
\\[1ex]
=&\hspace{-.25cm}-\big(\II_m-\w\w^{\top}\big)\HH_2^{-1} \w\,\v^{\top}-\big(\II_m-\w\w^{\top}\big)\HH_2^{-1}\BB^{\top}\Ss^{\#}\big(\II_n+\BB\HH_2^{-1}\w\,\v^{\top}\big)
\\[1ex]
=&\hspace{-.25cm}-\big(\II_m-\w\w^{\top}\big)\HH_2^{-1} \Big[\w\,\v^{\top}+\BB^{\top}\Ss^{\#}\big(\II_n+\BB\HH_2^{-1}\w\,\v^{\top}\big)\Big]
\end{array}$$
 
Finally, from (d), we get that  
$$\begin{array}{rl}
\XX_2=&\hspace{-.25cm}\HH_2^{-1}(\II_m-\w\w^\top)+\HH_2^{-1}\BB^\top\big[\v\w^{\top}+\big(\II_n+\v\w^{\top}\HH_2^{-1}\BB^\top\big)\Ss^{\#}\BB\big]\HH_2^{-1} \big(\II_m-\w\w^{\top}\big)\\[1ex]
=&\hspace{-.25cm}\HH_2^{-1}(\II_m-\w\w^\top)+\big[-\w\,\w^{\top}+\big(\II_m-\w\,\w^{\top}\big)\HH_2^{-1}\BB^\top\Ss^{\#}\BB\big]\HH_2^{-1} \big(\II_m-\w\w^{\top}\big)\\[1ex]
=&\hspace{-.25cm}\HH_2^{-1}(\II_m-\w\w^\top)-\w\,\w^{\top}\HH_2^{-1} \big(\II_m-\w\w^{\top}\big)\\[1ex]
+&\hspace{-.25cm}\big(\II_m-\w\,\w^{\top}\big)\HH_2^{-1}\BB^\top\Ss^{\#}\BB\HH_2^{-1} \big(\II_m-\w\w^{\top}\big)\\[1ex]
=&\hspace{-.25cm}(\II_m-\w\,\w^{\top})\HH_2^{-1}\Big[\HH_2+\BB^\top\Ss^{\#}\BB\Big] \HH_2^{-1}\big(\II_m-\w\w^{\top}\big).\hspace{3cm}\qed \end{array}$$

Observe that if $\v=\w=\0$, then $\XX_1=\Ss^\#=\Ss^{-1}$ and the above formula for $\AA^\#$ becomes \eqref{Schur:inv}, so the preceeding Theorem is also true for the nonsingular case. 
In addition, as a by--product we obtain the following result, see  \cite[Lemma 1]{CEGM16}.

 \begin{corollary} 
\label{lemmap}
Let $\HH \in \mathcal{M}_{n\times n}(\RR)$ be a  symmetric matrix, $\s\in \mathcal{M}_{n\times 1}(\RR)$, $\alpha\not=0$ and assume that  $\AA=\left(\begin{array}{cc}
\HH   & \s\\
\s^{\top} & \alpha
\end{array}
\right)$ is singular and has $0$ as a simple eigenvalue.  If  ${\sf S}={\sf H}-\alpha^{-1}\s\s^{\top}$   then  
%+
$$\AA^\#=\left(\begin{array}{cc}
\MM\Ss^\#\MM +\alpha^{-1}w^2\v\v^{\top}& -\alpha^{-1} \big(1-w^2\big)\big(w\v+\MM\Ss^\#\s\big)\\[1ex]
-\alpha^{-1} \big(1-w^2\big) \big(w\v^\top+\s^\top\Ss^\#\MM^\top\big)& \alpha^{-2}(1-w^2)^2\Big[\alpha+\s^\top\Ss^{\#}\s\Big]
\end{array}
\right),$$
where $\u^\top=(\v^\top,w)$ is an unitary $0$--eigenvector of $\AA$ and $\MM= \II_n+\alpha^{-1}w\v\s^\top$.\end{corollary}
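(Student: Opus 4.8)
The plan is to recognize the Corollary as precisely the case $m=1$ of the preceding Theorem and to obtain its three blocks by direct specialization of the formulas for $\XX_1$, $\XX_2$ and $\CC$. The hypotheses match perfectly: here the block $\HH_2$ is the $1\times1$ scalar $\alpha$, which is nonsingular exactly because $\alpha\neq0$; the off-diagonal block $\BB$ is the column vector $\s$; and $\AA$ is assumed singular with $0$ a simple eigenvalue. Consequently the $0$-eigenvector $\u^\top=(\v^\top,\w^\top)$ of the Theorem becomes $\u^\top=(\v^\top,w)$, where the block $\w$ degenerates to the scalar $w=-\alpha^{-1}\s^\top\v$, and the Schur complement $\Ss=\HH_1-\BB\HH_2^{-1}\BB^\top$ collapses to $\Ss=\HH-\alpha^{-1}\s\s^\top$, as stated.

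First I would fix the dictionary of substitutions $\HH_1=\HH$, $\BB=\s$, $\HH_2^{-1}=\alpha^{-1}$, and replace the now-scalar quantities $\w$ and $\w\w^\top$ by $w$ and $w^2$. With these, every rank-one product in the Theorem simplifies. The pre-factor $\II_n+\v\w^\top\HH_2^{-1}\BB^\top$ becomes $\II_n+\alpha^{-1}w\,\v\s^\top=\MM$, and the post-factor $\II_n+\BB\HH_2^{-1}\w\,\v^\top$ becomes $\II_n+\alpha^{-1}w\,\s\v^\top=\MM^\top$, its transpose; hence $\XX_1=\MM\,\Ss^\#\MM^\top+\v\w^\top\HH_2^{-1}\w\,\v^\top=\MM\,\Ss^\#\MM^\top+\alpha^{-1}w^2\,\v\v^\top$, whose symmetry is automatic from that of $\Ss^\#$. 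In $\CC$ the term $\v\w^\top$ becomes $w\v$, the inner factor $(\II_n+\v\w^\top\HH_2^{-1}\BB^\top)\Ss^\#\BB$ becomes $\MM\Ss^\#\s$, and $\HH_2^{-1}(\II_m-\w\w^\top)$ becomes the scalar $\alpha^{-1}(1-w^2)$, giving $\CC=-\alpha^{-1}(1-w^2)\big(w\v+\MM\Ss^\#\s\big)$. Finally $\XX_2$ reduces to $(1-w^2)\big(\alpha^{-1}+\alpha^{-2}\s^\top\Ss^\#\s\big)(1-w^2)=\alpha^{-2}(1-w^2)^2\big(\alpha+\s^\top\Ss^\#\s\big)$.

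There is no genuine obstacle here, as the Corollary is a pure specialization; the only points requiring care are bookkeeping. I would stress that the pre- and post-multipliers of $\Ss^\#$ in $\XX_1$ are genuinely $\MM$ and its transpose $\MM^\top$ (the transpose being forced by the symmetry requirement of Lemma \ref{GI:ch}), rather than $\MM$ twice. I would also keep the Corollary's scalar $\alpha$ (the $(n+1,n+1)$-entry) distinct from the auxiliary scalar bearing the same name inside the Theorem's proof; the latter does not survive into the final block formulas and so plays no role in the specialization. Finally I would confirm the elementary scalar identities $\w\w^\top=w^2$ and $\HH_2^{-1}=\alpha^{-1}$ used throughout, so that each factor collapses to the stated closed form.
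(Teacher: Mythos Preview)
Your proposal is correct and matches the paper's approach exactly: the Corollary is presented there as an immediate ``by-product'' of the preceding Theorem with no separate proof, so the specialization $m=1$, $\HH_2=\alpha$, $\BB=\s$, $\w=w$ that you carry out is precisely what is intended. Your observation that the upper-left block should read $\MM\Ss^\#\MM^\top$ rather than $\MM\Ss^\#\MM$, and your care in keeping the Corollary's scalar $\alpha$ separate from the auxiliary $\alpha=(\v^\top\v)^{-1}$ used inside the Theorem's proof, are both well taken.
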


Notice that the above corollary is also true when $\AA$ is invertible. Moreover, to assure that $0$ is simple at most, it is suffices to assume that $\HH$ is invertible, that is the hypothesis in \cite{CEGM16}.

%%%%%%%%%%%%%%%%%%%%%%%%%%%%%%%%%
%%%%%%%%%%%%%%%%%%%%%%%%%%%%%%%%%

%And the Kirchhoff index is
%$$
%\begin{array}{rl}
%K(\Gamma)=&(n_1+n_2)\Big(\hbox{trace}({\sf M})+\hbox{trace}(\HH_2^{-1})-1+\dfrac{1}{n_1^2n_2}\sum(\HH_2^{-1})\\[1ex]
%&+\dfrac{2}{n_1n_2}\sum({\sf M}{\sf B}\HH_2^{-1})+\dfrac{1-2n_1}{n_1n_2^2}\sum(\HH_2^{-1}{\sf B}^{\top}{\sf M}{\sf B}\HH_2^{-1})\Big).
%\end{array}
%$$

%%%%%%%%%%%%%%%%%%%%%%%%%%%%%%%%%
%%%%%%%%%%%%%%%%%%%%%%%%%%%%%%%%%
\section{The group inverse of the Laplacian of two connected networks}
\label{}
%%%%%%%%%%%%%%%%%%%%%%%%%%%%%%%%%
%%%%%%%%%%%%%%%%%%%%%%%%%%%%%%%%%

In the following, the triple $\Gamma=(V,E,c)$ denotes a finite network, i.e., a connected graph without loops nor multiple edges with vertex set $V$, with cardinality $n$, and edge set $E$, in which each edge $e_{xy}\in E$ has assigned a value $c(x,y)>0$ named conductance. The conductance $c$ is a symmetric function $c: V\times V \rightarrow [0,\infty)$ such that $c(x,x)=0$ for any $x\in V$ and where the vertex $x$ is adjacent to vertex $y$ iff $c(x,y)>0$.
%Given a weight on $V$, $\omega\in \Omega(V)$, for any pair of vertices $(x,y)\in V\times V$ the $\omega$--dipole between $x$ and $y$ is the function $\tau_{xy}=\frac{\varepsilon_x}{\omega(x)}-\frac{\varepsilon_y}{\omega(y)}$. 
The  Laplacian of the network $\Gamma$
is the endomorphism of $\CV$ that assigns to each $u\in \CV$ the function
$$\L(u)(x)=\sum_{y\in V}c(x,y)\left(u(x)-u(y)\right), \quad x\in V.$$
The Laplacian is a singular elliptic operator on $\CV$ and moreover $\L(u)=0$ iff $u$ is a constant function. Its kernel is the known Laplacian matrix ${\sf L}$. Given a function $q\in \CV$, the Schr\"odinger operator on $\Gamma$ with potential $q$ is the endomorphism of $\CV$ that assigns to each $u\in \CV$ the function $\L_q(u)=\L(u)+qu$. The associated kernel is denoted by ${\sf L}_q$. Besides, given a weight $\omega\in \Omega(V)$, the potential determined by the weight $\omega$ is the function $q_{\omega}=-\frac{1}{\omega}\L(\omega)$. It is well--known that the Schr\"odinger operator $\L_q$ is $(\lambda,\omega)$--elliptic iff $q=q_{\omega}+\lambda$. Moreover, it is  singular iff $\lambda=0$ and in this case is the Laplacian operator. Then, $q=q_{\omega}+\lambda=0$, and
$\L(v)=0$ iff $v=a\omega$, $a\in \mathbb{R}$, that is, the eigenvector ${\sf v}$ associated to the eigenvalue $0$ is proportional to the weight $\omega$. If $\G$ is the orthogonal Green operator associated with $\L_q$, and ${\sf G}$ is its corresponding kernel, ${\sf G}$ is the group inverse of the Laplacian matrix ${\sf L}$ iff $\lambda=0$ and $v$ is proportional to the normalized weight $\omega=\frac{1}{\sqrt{n}}{\sf j}$, where ${\sf j}$ stands for the all-ones vector and ${\sf J}_{n,m}$ stands for the $(n,m)$-all-ones matrix. Observe that for $\lambda>0$ the kernel of $\L_q$ is non-singular, and then its inverse can be computed with the Identities (\ref{Schur:inv}) or (\ref{Schur:inv2}).

Now, we consider two networks $\Gamma_1=(V_1,E_1,c_1)$ and $\Gamma_2=(V_2,E_2,c_2)$, and we connect a subset of vertices $\{x_1,\dots, x_{m_1}\}\subseteq V_1$ with a subset of vertices $\{y_1,\dots, y_{m_2}\} \subseteq V_2$, by means of some new edges of conductances $a_{ij}=a(x_i,y_j)$, where $1\leq i \leq m_1 \leq n_1$, $1\leq j\leq m_2 \leq n_2$. Thus, we denote by $\Gamma=(V,E,c)$ the new network, that has as new vertex set $V=V_1\cup V_2$ ($n=n_1+n_2$), new edge set $E=E_1\cup E_2 \cup \{e_{x_1y_1},\dots,e_{x_{m_1}y_{m_2}}\}$ and  new conductance
$$c'(x,y)=
\left\{\begin{array}{ll}
a_{ij}>0 & \textrm{if} \; \;x=x_i\in V_1,\; y=y_j\in V_2, \,1\leq i \leq m_1,\,1\leq j \leq m_2,\\
c_1(x,y) & \textrm{if}  \; \;x,y\in V_1,\\
c_2(x,y) & \textrm{if}  \; \;x,y\in V_2,\\
0& \textrm{otherwise}.
\end{array}
\right.
 $$

Besides, if $\omega_1(x)$ denotes the weight associated to $\Gamma_1$ and $\omega_2(x)$ denotes the weight associated to $\Gamma_2$, we define $\omega(x)$ the weight associated to $\Gamma$ as follows: 
$\omega(x)=\omega_i(x)/\sqrt{\omega_1(x)^2+\omega_2(x)^2}$ for any $x\in V$, $i=1,2$. 
Observe that the relation $\omega_i(x)/\omega_i(y)=\omega(x)/\omega(y)$ holds for any $x$, $y\in V$, $i=1,2$.

Given $\lambda\geq 0$,  consider $q\in \CV$ and $p_i\in \mathcal{C}(V_i)$, $i=1,2$ and the potentials $q=q_\omega+\lambda$ and $p_i=q_{\omega_i}+\lambda$, $i=1,2$. In the sequel we consider fixed $\lambda\ge 0$ and hence the potentials $q$ and $p_1,p_2$, and we denote by $\G_q$ and ${\sf G}_q$ the Green operator and the Green function corresponding to $\L_q$, and by $\G^i_{p_i}$ and ${\sf G}^i_{p_i}$ the Green operator and the Green function respectively, corresponding to $\L^i_{p_i}$, $i=1,2$. 

Besides we use the following notation: for any $i=1,\ldots,m_1$, $j=1,\ldots,m_2$ we denote by $\rho_{ij}=\sqrt{a_{ij}\omega(x_i)\omega(y_i)}$.

\begin{prop}
For any $u\in \mathcal{C}(V)$ it is satisfied
$$\begin{array}{rll}
\L_qu(x_i)=&\hspace{-.25cm}\L^1_{p_1} u(x_i)-\sum\limits_{j=1}^{m_2} a_{ij} u(x_i)+\sum\limits_{j=1}^{m_2}\dfrac{\rho^2_{ij}}{\omega^2(x_i)}u(x_i),&\quad i=1,\dots,m_1,\\[2ex]
\L_qu(y_j)=&\hspace{-.25cm}\L^2_{p_2} u(y_j)-\sum\limits_{i=1}^{m_1} a_{ij} u(y_j)+\sum\limits_{i=1}^{m_1}\dfrac{\rho^2_{ij}}{\omega^2(y_j)}u(y_j),&\quad j=1,\dots,m_2,\\[2ex]
\L_qu(x)=&\hspace{-.25cm}\L^i_{p_i} u(x),  \quad i=1,2,\quad \hbox{otherwise}.
\end{array}$$
\end{prop}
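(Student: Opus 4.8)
The plan is to prove the three identities by returning to the definition $\L_q u(x)=\sum_{y\in V}c'(x,y)\bigl(u(x)-u(y)\bigr)+q(x)u(x)$ and splitting the conductance sum over $y\in V$ according to the type of edge incident to $x$. Writing $\L^i$ for the combinatorial Laplacian of $\Gamma_i$, a connecting vertex $x=x_i\in V_1$ has as neighbours in $\Gamma$ its $\Gamma_1$--neighbours (through $c_1$) together with the vertices $y_1,\dots,y_{m_2}$ reached by the new edges (through $a_{i1},\dots,a_{im_2}$), so that
$$\sum_{y\in V}c'(x_i,y)\bigl(u(x_i)-u(y)\bigr)=\L^1 u(x_i)+\sum_{j=1}^{m_2}a_{ij}\bigl(u(x_i)-u(y_j)\bigr).$$
For a vertex that is not a connecting vertex the second sum is empty, which will immediately yield the ``otherwise'' case once the potential has been dealt with.

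The crux is to rewrite the potential $q=q_\omega+\lambda$ at $x_i$ in terms of $p_1=q_{\omega_1}+\lambda$. Here I would use two facts: first, $q_\omega$ is scale invariant, $q_{c\omega}=q_\omega$ for every constant $c\neq 0$; second, by the remark preceding the statement, $\omega$ restricted to $V_1$ is proportional to $\omega_1$. Combining these, the part of $\L\omega(x_i)$ coming only from edges inside $\Gamma_1$ satisfies $-\omega(x_i)^{-1}\L^1\omega(x_i)=q_{\omega_1}(x_i)$, while the new edges contribute the extra term $-\omega(x_i)^{-1}\sum_j a_{ij}\bigl(\omega(x_i)-\omega(y_j)\bigr)$. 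Using $\rho_{ij}^2=a_{ij}\,\omega(x_i)\,\omega(y_j)$ this gives
$$q(x_i)=p_1(x_i)-\sum_{j=1}^{m_2}a_{ij}+\sum_{j=1}^{m_2}a_{ij}\frac{\omega(y_j)}{\omega(x_i)}=p_1(x_i)-\sum_{j=1}^{m_2}a_{ij}+\sum_{j=1}^{m_2}\frac{\rho_{ij}^2}{\omega^2(x_i)}.$$

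Finally I would assemble the two pieces. Multiplying the potential identity by $u(x_i)$ and adding it to the Laplacian part, the diagonal term $+\bigl(\sum_j a_{ij}\bigr)u(x_i)$ produced by the new edges in $\L u(x_i)$ cancels exactly against the $-\bigl(\sum_j a_{ij}\bigr)u(x_i)$ coming from $q(x_i)$, leaving precisely $\L^1_{p_1}u(x_i)$, the coupling term $-\sum_{j=1}^{m_2}a_{ij}u(y_j)$ to $V_2$, and the diagonal correction $\sum_{j=1}^{m_2}\rho_{ij}^2\,\omega^{-2}(x_i)\,u(x_i)$. The identity at $y_j$ then follows by the symmetric computation with the roles of $V_1,\Gamma_1,\omega_1$ and $V_2,\Gamma_2,\omega_2$ interchanged, and the ``otherwise'' case is immediate since such a vertex meets no new edge, whence $q$ agrees with $p_i$ there and $\L_q u(x)=\L^i_{p_i}u(x)$. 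The only genuinely delicate point is the potential bookkeeping in the middle step: one must check that passing from $\omega_i$ to $\omega$ leaves $q_{\omega_i}$ unchanged (this is exactly scale invariance together with proportionality of the weights) and that the degree increase at each connecting vertex is cancelled on the nose, so that all that survives of a new edge is the symmetric coupling $a_{ij}$ together with the quantity $\rho_{ij}^2/\omega^2$.
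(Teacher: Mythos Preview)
The paper states this proposition without proof, so there is no argument to compare against; your direct computation from the definitions is the natural route and is carried out correctly.

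One point deserves comment. Your computation ends with the off--diagonal coupling term $-\sum_{j=1}^{m_2}a_{ij}\,u(y_j)$, whereas the proposition as printed has $-\sum_{j=1}^{m_2}a_{ij}\,u(x_i)$ (and symmetrically at $y_j$). Your version is the correct one: the block form of ${\sf L}'_{p}$ displayed immediately after the proposition has a nonzero off--diagonal block ${\sf B}$, which is exactly the matrix encoding of the terms $-a_{ij}u(y_j)$ and $-a_{ij}u(x_i)$ in the first and second lines respectively. With the printed $u(x_i)$ the operator would be block--diagonal, contradicting that display. So the discrepancy is a typo in the statement, not a flaw in your argument; you have proved what the proposition is meant to say.
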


In terms of matrices, the relation between the kernels of the Schr\"odinger operators of $\Gamma_1$, $\Gamma_2$ and $\Gamma'$ is given by
$$ {\sf L}'_{p}=
\left(\begin{array}{cc}
{\sf H}_1 & {\sf B}\\
 {\sf B}^{\intercal} & {\sf H}_2
\end{array}
\right),$$
where for $i=1,2$ each matrix ${\sf H}_i$ is the sum of the original Schr\"odinger matrix ${\sf L}_{p}^i$ plus a dia\-gonal matrix ${\sf D}_i$, such that 
\begin{eqnarray*}
({\sf D}_1)_{kk}=\sum_{j=1}^{m_2}\dfrac{\omega(y_j)}{\omega(x_k)} a_{kj},& \textrm{for any $k=1,\dots,m_1$, }\\
({\sf D}_2)_{kk}=\sum_{i=1}^{m_1}\dfrac{\omega(x_k)}{\omega(y_j)} a_{kj},& \textrm{for any $k=1,\dots,m_2$. }
\end{eqnarray*}
Moreover, if  ${\sf e}_i$ are the vectors of the standard basis, ${\sf B}$ is the matrix whose columns are $\displaystyle-\sum_{i=1}^{m_1} a_{ij}{\sf e}_i$, for $j=1,\dots,m_2$.

In particular, for Laplacian matrices, $\lambda=0$ and the weight is constant, $\omega=(1/\sqrt{n}){\sf j}$,and the relation among ${\sf L}_1$, ${\sf L}_2$ and ${\sf L}'$ is given by
$$ {\sf L}'=
\left(\begin{array}{cc}
{\sf H}_1 & {\sf B}\\
 {\sf B}^{\intercal} & {\sf H}_2
\end{array}
\right),$$
where for $i=1,2$ each matrix ${\sf H}_i$ is the sum of the Laplacian matrix ${\sf L}_i$ plus a dia\-gonal matrix ${\sf D}_i$, such that for any $k=1,\dots,m_1$, $({\sf D}_1)_{kk}=\sum_{j=1}^{m_2} a_{kj}$ and for any $k=1,\dots,m_2$, $({\sf D}_2)_{kk}=\sum_{i=1}^{m_1} a_{kj}$.
And ${\sf B}$ is again the matrix whose columns are $\displaystyle-\sum_{i=1}^{m_1} a_{ij}{\sf e}_i$, for $j=1,\dots,m_2$.

Our main result, relating the group inverses of the Laplacian matrices of the two original networks, with the group inverse of the Laplacian matrix of the new network is given in the following result:
\begin{theorem} \label{maintheorem}
 The group inverse of ${\sf L}'$ is given by
 $$({\sf L}')^{\dag}
=
\left(
\begin{array}{cc}
\XX_1
& \CC\\[2ex]
\CC^{\top}
&\XX_2
\end{array}
\right),
$$
where 
\begin{eqnarray*}
\XX_1&\hspace{-.25cm}=&\hspace{-.25cm} \frac{1}{n_1^2n_2^2}
\left[(n_1n_2\II_n+{\sf J_{n_1n_2}}\HH_2^{-1}\BB^{\top})\MM 
(n_1n_2{\sf I_{n_1}}+\BB\HH_2^{-1}{\sf J_{n_2n_1}})+{\sf J_{n_1n_2}}\HH_2^{-1}{\sf J_{n_2n_1}}\right],\\[1.5ex]
\XX_2&\hspace{-.25cm}=& \hspace{-.25cm} \left({\sf I_{n_2}}-\frac{1}{n^2_2}{\sf J_{n_2}}\right)\HH_2^{-1}(\HH_2+\BB^{\top}\MM\BB)\HH_2^{-1}\left({\sf I_{n_2}}-\frac{1}{n^2_2}{\sf J_{n_2}}\right),\\[1.5ex]
\CC&\hspace{-.25cm}= &\hspace{-.25cm}-\left[\frac{1}{n_1n_2}{\sf J_{n_1n_2}}+\left({\sf I_{n_1}}+\frac{1}{n_1n_2}{\sf J_{n_2n_1}}\HH_2^{-1}\BB^{\top}\right)\MM\BB\right]\HH_2^{-1}\left( {\sf I_{n_2}}-\frac{1}{n^2_2}{\sf J_{n_2}}\right),\\[1.5ex]
\end{eqnarray*}
the matrix ${\sf M}=(\HH_1-\BB\HH_2^{-1}\BB^{\top})^{\dag}$ and ${\sf H}_2={\sf L_2}
+\hbox{diag}(\sum_{i=1}^{m_1} a_{1j},\dots,\sum_{i=1}^{m_1} a_{n_2j})$.
\end{theorem}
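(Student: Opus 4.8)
The plan is to read off $({\sf L}')^\dagger$ directly from the block group--inverse formula established as the main result of the previous section, applied to $\AA={\sf L}'$. The Proposition and its matrix reformulation preceding the statement already exhibit ${\sf L}'$ in the required shape, with $\HH_1,\HH_2$ symmetric, and by definition $\MM=\Ss^{\#}$ is precisely the group inverse of the Schur complement $\Ss=\HH_1-\BB\HH_2^{-1}\BB^\top$. Since $\Gamma$ is a connected network, ${\sf L}'$ is symmetric, positive semidefinite and singular with $0$ a simple eigenvalue, so every hypothesis of that Theorem is in place \emph{provided} $\HH_2$ is non singular; establishing this is the first and only genuinely non--formal step.

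To see that $\HH_2={\sf L}_2+\DD_2$ is invertible I would show it is positive definite. Here ${\sf L}_2$ is the positive semidefinite Laplacian of the connected network $\Gamma_2$, whose kernel consists of the constant vectors, and $\DD_2$ is diagonal with nonnegative entries, at least one of them strictly positive because at least one connecting edge is incident with $V_2$. Then for any $\xi$, the equality $\xi^\top\HH_2\,\xi=\xi^\top{\sf L}_2\,\xi+\xi^\top\DD_2\,\xi=0$ forces $\xi$ to be constant (from the ${\sf L}_2$ term) and to vanish where $\DD_2$ is positive (from the $\DD_2$ term), whence $\xi=\0$. This grounded--Laplacian argument gives $\HH_2\succ0$, hence invertible, and it is the place where the connectivity of $\Gamma$ is really used.

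Next I would pin down the unit $0$--eigenvector. Because ${\sf L}'$ is the combinatorial Laplacian of a connected graph, its kernel is spanned by the all--ones vector, so the normalised eigenvector is $\u=\tfrac{1}{\sqrt{n}}\,{\sf j}$ with $n=n_1+n_2$, and splitting it along the block sizes gives $\v=\tfrac{1}{\sqrt{n}}{\sf j}_{n_1}$, $\w=\tfrac{1}{\sqrt{n}}{\sf j}_{n_2}$. I would then verify the two compatibility relations that the block Theorem presupposes, namely $\Ss\v=\0$ and $\w=-\HH_2^{-1}\BB^\top\v$; both follow immediately from ${\sf L}_i{\sf j}=\0$ together with the identities $\DD_1{\sf j}_{n_1}=-\BB{\sf j}_{n_2}$ and $\DD_2{\sf j}_{n_2}=-\BB^\top{\sf j}_{n_1}$, which are read off from the descriptions of $\DD_1,\DD_2$ and of the columns of $\BB$. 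With $\u$ in hand, the rank--one outer products $\v\v^\top,\w\w^\top,\v\w^\top$ appearing in the general formula collapse to scalar multiples of the all--ones blocks ${\sf J}_{n_1},{\sf J}_{n_2},{\sf J}_{n_1n_2}$, and $\Ss^\#$ is replaced by $\MM$; substituting these into the three expressions $\XX_1,\XX_2,\CC$ of the block Theorem and collecting terms produces the displayed formulae.

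I expect the invertibility of $\HH_2$ to be the one substantive obstacle; after that the argument is a bookkeeping exercise. The delicate part of the bookkeeping is managing the mixed products such as ${\sf J}_{n_1n_2}\HH_2^{-1}{\sf J}_{n_2n_1}$ and tracking the flanking factor of the form $\II_{n_2}-c\,{\sf J}_{n_2}$ that surrounds $\XX_2$ and the right end of $\CC$, which is exactly the factor $\II_m-\w\w^\top$ of the block Theorem specialised to the constant eigenvector; checking that this factor behaves correctly under the normalisation (it is the orthogonal projection complementary to $\w$) is where an arithmetic slip is most likely, so I would carry out that simplification explicitly and confirm the resulting blocks are symmetric and annihilate $\u$, as Lemma~\ref{GI:ch} demands.
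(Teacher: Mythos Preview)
Your approach is essentially identical to the paper's: apply Theorem~\ref{lemmap} to $\AA={\sf L}'$ with the explicit unit null--eigenvector $\u=\tfrac{1}{\sqrt{n}}{\sf j}$, so that $\v=\tfrac{1}{\sqrt{n}}{\sf j}_{n_1}$ and $\w=\tfrac{1}{\sqrt{n}}{\sf j}_{n_2}$, and substitute. The paper's own proof is in fact just that single sentence, so your additional care in establishing the positive definiteness of $\HH_2$ and in checking the compatibility relations $\Ss\v=\0$, $\w=-\HH_2^{-1}\BB^\top\v$ goes beyond what the paper provides.
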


\begin{proof}
The result is straightforward from Theorem \ref{lemmap}, taking into account that
${\sf v}=(1/\sqrt{n}){\sf j}_{n_1}$, ${\sf w}=(1/\sqrt{n}){\sf j}_{n_2}$.
\end{proof}

\section{Acknowledgement}
This work has been partially supported by the Mathematics Department of the Universitat Polit\`ecnica de Catalunya (BarcelonaTech).
%\end{acknowledgement}

%% References
%%
%% Following citation commands can be used in the body text:
%% Usage of \cite is as follows:
%%   \cite{key}          ==>>  [#]
%%   \cite[chap. 2]{key} ==>>  [#, chap. 2]
%%   \citet{key}         ==>>  Author [#]

%% References with bibTeX database:

%\bibliographystyle{model1-num-names}
%\bibliography{<your-bib-database>}

\begin{thebibliography}{99}


\bibitem{BG03} {\small {\sc A. Ben-Israel, T.N.E. Greville: }{\it Generalized inverses. Theory and applications.}
2nd Edition,  Springer, New York, 2003.}


\bibitem{BCEG10}
{\small {\sc E. Bendito, A. Carmona, A.M. Encinas, J.M. Gesto, M. Mitjana:} {\it Kirchhoff indexes of a network}, 
Linear Algebra Appl. , {\bf 432} (2010), 2278--2292.}

%\bibitem{BeCaEnMi10}
%{\small {\sc E. Bendito, A. Carmona, A.M. Encinas, M. Mitjana:} {\it Generalized Inverses of symmetric $M$--matrices}, 
%Linear Algebra Appl. , {\bf 432} (2010), 2438--2455.}

\bibitem{BCE12}
{\small {\sc E. Bendito, A. Carmona, A.M. Encinas} {\it The Kirchhoff indices of join networks.}
Discrete Appl. Math., {\bf 160} (2012), 24--37.}

%\bibitem{BCEM10}
%{\small {\sc E. Bendito, A. Carmona, A.M. Encinas, M. Mitjana:} {\it Generalized inverses of symmetric M--matrices.}
%Linear Algebra Appl., {\bf 432} (2010), 2438--2454.}
%

\bibitem{CEM14}
{\small {\sc A. Carmona, A.M. Encinas, M. Mitjana:} {\it Discrete elliptic operators and their Green operators.}
Linear Algebra Appl., {\bf 442} (2014), 115--134.}

\bibitem{CEGM16}
{\small {\sc A. Carmona, A.M. Encinas, S. Gago, M. Mitjana:} {\it Green operators of networks with a new vertex.}
Linear Algebra Appl., {\bf 491} (2016), 419--433.}

\bibitem{CEGJM15}
{\small {\sc A. Carmona, A.M. Encinas, S. Gago, M.J. Jim\'enez, M. Mitjana:} {\it The inverse of some circulant matrices.}
Appl. Math. Comput., {\bf 270} (2015), 785--793.}

\bibitem{HM75}
{\small {\sc C.--H. Hung, T. L. Markham:} {\it The Moore--Penrose inverse of a partitioned matrix $M=\begin{pmatrix}A&D\\B&C\end{pmatrix}$.}
Linear Algebra Appl., {\bf 11} (1975), 73--86.}

\bibitem{ESMJK11}
{\small {\sc W. Ellens, F.M. Spieksma, P. Van Mieghem, A. Jamakovic, R.E. Kooij:} {\it Effective graph resistance}, Linear Algebra  Appl. {\bf  435} (2011), 2491--2506}
%
%\bibitem{CEGJM15}
%{\small {\sc A. Carmona, A.M. Encinas, S. Gago, M.J. Jim\'enez, M. Mitjana:} {\it The inverse of some circulant matrices.}
%Appl. Math. Comput., {\bf 270} (2015), 785--793.}

\bibitem{C11}
{\small {\sc Z. Cinkir:} {\it Deletion and contraction identities for the resistance values and the Kirchhoff index.}
Int. J. Quantum Chem., {\bf 111} (2011), 4030--4041.}


\bibitem{Dav79}
{\small {\sc P.J. Davis:} {\it Circulant Matrices.} John Wiley \& sons, New York, (1979).}

\bibitem{Ga16}
{\small {\sc S.Gago:} {\it The Group Inverse of extended Symmetric and Periodic Jacobi Matrices.} submitted.}

\bibitem{KR93} {\small {\sc D.J. Klein, M. Randi\'c}}, {\it Resistance distance}, J. Math. Chem., {\bf 12} (1993), 81--95.

\bibitem{YK13}
{\small{\sc Y. Yang, D.J. Klein}}, {\it A recursion formula for resistance distances and its applications}, Discrete Appl. Math. {\bf 161} (2013), 2702--2715.

\bibitem{YK14}
{\small{\sc Y. Yang, D.J. Klein}}, {\it Resistance distances in composite graphs}, J. Phys. A {\bf 47} (2014), 375203(20pp).

 
\bibitem{WY00}
{\small {\sc W. Wang, D. Yang, Y. Luo:} {\it The Laplacian polynomial and Kirchhoff index of graphs derived from regular graphs.}
Discrete Appl. Math., {\bf 161} (2013), 3063--3071.}

\bibitem{ZY09}
{\small {\sc H. Zhang, Y. Yang, C. Li:} {\it Kirchhoff index of composite graphs}, Discrete Appl. Math. {\bf 157} (2009),  2918--2927.}

 \end{thebibliography}

%% Authors are advised to submit their bibtex database files. They are
%% requested to list a bibtex style file in the manuscript if they do
%% not want to use model1-num-names.bst.

%% References without bibTeX database:

%\section{Bibliography}

\end{document}